\def\S{\mathfrak{S}}
\def\T{\mathcal{T}}
\def\hH{\mathcal{H}}
\def\L{\mathcal{L}}
\def\P{\mathcal{P}}
\theoremstyle{plain}
\newtheorem{theorem}{Theorem}
\newtheorem{corollary}{Corollary}
\theoremstyle{definition}
\newtheorem*{Ack}{Acknowledgment}
\theoremstyle{remark}
\begin{document}
\title{A note on the Polignac numbers}
\author{Hao Pan}
\email{haopan1979@gmail}
\address{Department of Mathematics, Nanjing University,
Nanjing 210093, People's Republic of China}
\begin{abstract}
Suppose that $k\geq 3.5\times 10^6$ and $\hH=\{h_1,\ldots,h_{k_0}\}$ is admissible. Then for any $m\geq 1$, the set
$$
\{m(h_j-h_i):\,h_i<h_j\}
$$
contains at least one Polignac number.
\end{abstract}
 \maketitle

A recent huge breakthrough on prime number theory is Zhang's brilliant work, which asserts that
$$
\liminf_{n\to\infty}(p_{n+1}-p_n)\leq 7\times10^7,
$$
where $p_n$ denotes the $n$-th prime. For a set $\hH=\{h_1,h_2,\ldots,h_{k_0}\}$ of positive integers, we say $\hH$ is \textit{admissible} if
$$\nu_p(\hH)<p$$ for every prime $p$, where $\nu_p(\hH)$ denotes the number of distinct residue classes occupied by those $h_i$ modulo $p$. Zhang proved that if $k_0\geq 3.5\times 10^6$ and $\hH=\{h_1,\ldots,h_{k_0}\}$ is admissible, then
for sufficiently large $x$, there exists $n\in[x,2x]$ such that
$$
\{n+h_1,n+h_2,\cdots,n+h_{k_0}\}
$$
contains at least two primes.

In fact, we may give the following ``cheap'' extension for Zhang's theorem.
\begin{theorem}\label{t1}
Let $k_0\geq 3.5\times 10^6$ and $A>0$. Suppose that $x$ is sufficiently large and $1\leq q\leq(\log x)^A$. If $\hH=\{h_1,\ldots,h_{k_0}\}$ is admissible and $(q,h_1\cdots h_{k_0})=1$, there exists $n\in[x,2x]$ such that
$$
\{qn+h_1,qn+h_2,\cdots,qn+h_{k_0}\}
$$
contains at least two primes.
\end{theorem}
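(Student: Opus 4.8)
The plan is to run Zhang's sieve on the tuple $\hH$ after the substitution $m=qn$, with the congruence $q\mid m$ folded into the $W$-trick. Writing $m=qn$ with $n\in[x,2x]$, Theorem~\ref{t1} asks for an $m\in[qx,2qx]$ with $q\mid m$ such that $m+h_1,\dots,m+h_{k_0}$ contains two primes; since $(q,h_1\cdots h_{k_0})=1$, every $m+h_i$ with $q\mid m$ is automatically coprime to $q$, so $q$ imposes no local obstruction. Following Zhang I would take $W=\prod_{p\le D_0}p$ with $D_0\to\infty$ slowly, set $\widetilde W=\operatorname{lcm}(W,q)$, and restrict to $m\equiv\nu\ (\mathrm{mod}\ \widetilde W)$, where $\nu$ is chosen by the Chinese Remainder Theorem --- using admissibility of $\hH$ together with $(q,h_1\cdots h_{k_0})=1$ --- so that $\nu\equiv0\ (\mathrm{mod}\ q)$ and $(\nu+h_i,W)=1$ for every $i$; in particular $q\mid m$. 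Note that $\widetilde W\le Wq\le(\log x)^{A+o(1)}$ and that every prime factor of $\widetilde W$ is at most $\max(D_0,q)=x^{o(1)}$, so $\widetilde W$ is a legitimate $W$-trick modulus in Zhang's framework, differing from $W$ only by the prime factors coming from $q$, each exceeding $D_0$ and at most $(\log x)^A$.

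Next I would form Zhang's weighted sum
\[
S=\sum_{\substack{qx\le m\le 2qx\\ m\equiv\nu\ (\mathrm{mod}\ \widetilde W)}}\Bigl(\sum_{i=1}^{k_0}\mathbf 1_{\{m+h_i\ \mathrm{prime}\}}-1\Bigr)\lambda(m)^2,
\]
with $\lambda(m)$ the GPY-type weights $\sum_d\mu(d)\bigl(\log(R/d)\bigr)^{k_0+\ell_0}$ summed over those divisors $d<R$ of $\prod_i(m+h_i)$ meeting Zhang's restrictions (squarefree and $x^{\delta}$-smooth), with sieve level $R=x^{1/4+\varpi}/\widetilde W$ and Zhang's values of $\varpi,\delta,\ell_0$. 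Since we are sieving the same tuple $\hH$, Zhang's evaluation of the two main terms of $S$ --- the diagonal term and the prime-counting term --- carries over with $W$ replaced by $\widetilde W$ and the range $[x,2x]$ replaced by $[qx,2qx]$: the local densities $\nu_p(\hH)$ and the singular series are those of $\hH$, and the extra prime factors of $\widetilde W$, all exceeding $D_0$, are handled just like the large primes already dividing $W$. The resulting positivity criterion is still met for $k_0\ge3.5\times10^6$, the only changes from Zhang's case being the factor $\log R/\log x=1/4+\varpi-o(1)$, harmless since $\widetilde W\le(\log x)^{A+o(1)}$ and his inequality is strict with room to spare, and the $W$-trick factor $\widetilde W/\phi(\widetilde W)\ge W/\phi(W)$, which if it appears at all only helps. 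Hence $S>0$ for all sufficiently large $x$, uniformly in $1\le q\le(\log x)^A$. As $S>0$ forces some $m$ in the range to have at least two primes among $m+h_1,\dots,m+h_{k_0}$, the integer $n=m/q\in[x,2x]$ is as required.

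The one substantive point --- and the main obstacle --- is the uniformity in $q$ of the error terms, dominated by the prime-counting term. Expanding the square there reduces it, exactly as in Zhang's treatment, to bounding on average over squarefree $x^{\delta}$-smooth $d_1,d_2<R$ the discrepancy of the primes up to $2qx$ in arithmetic progressions to modulus $\widetilde W[d_1,d_2]$. The choice $R=x^{1/4+\varpi}/\widetilde W$ guarantees $\widetilde W[d_1,d_2]\le\widetilde W R^2\le x^{1/2+2\varpi}\le(qx)^{1/2+2\varpi}$, so these moduli remain within Zhang's level of distribution for smooth moduli; the fixed smooth factor $\widetilde W=x^{o(1)}$ is harmless --- one splits into residue classes modulo $\widetilde W$ and applies Zhang's theorem on each --- and the total error is $o\bigl(qx\,(\log R)^{k_0+2\ell_0}/(\widetilde W\log x)\bigr)$, uniformly for $q\le(\log x)^A$. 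Verifying that this, and the elementary estimates entering the main terms, go through with the weaker constraint $\widetilde W\le(\log x)^{O(1)}$ in place of $W\le(\log x)^{o(1)}$ is the only real bookkeeping; everything else is Zhang's proof read through the change of variables $n\mapsto qn$.
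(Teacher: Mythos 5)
Your proposal is in substance the same as the paper's: both simply rerun Zhang's argument with the modulus $q$ built into the congruence structure, and both ultimately rest on the claim that Zhang's equidistribution theorem for smooth moduli tolerates the insertion of a fixed factor of size $(\log x)^{O(1)}$ into the moduli. The packaging differs: you change variables to $m=qn$ and absorb $q$ into an enlarged $W$-trick modulus $\widetilde W=\mathrm{lcm}(W,q)$ with a CRT-chosen residue class, while the paper keeps $n$ as the sieve variable, sets $P(n)=\prod_i(qn+h_i)$, restricts the sieve divisors to $(d,q)=1$, and deletes the Euler factors at $p\mid q$ from the singular series $\S$, so that the main terms acquire compensating factors $\phi(q)/q$ (in $\T_2^*$) and $q/\phi(q)$ (in $S_2$) and Zhang's positivity criterion is untouched. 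Since Zhang's own proof has no $W$-trick, your route needs somewhat more rewriting of his main-term computations than the paper's, which can quote them after the coprimality restriction; that difference is cosmetic. One caution on the substantive point you correctly isolate: the sentence ``one splits into residue classes modulo $\widetilde W$ and applies Zhang's theorem on each'' is not a valid reduction. Zhang's Theorem 2 bounds discrepancies of primes in progressions to squarefree smooth moduli $d$, measured against all primes; it says nothing about how the primes in a fixed class modulo $\widetilde W$ distribute among classes modulo $\widetilde W d$, and $\widetilde W$ need not even be squarefree, since $q$ need not be. What is actually required --- in your version for the moduli $\widetilde W[d_1,d_2]$, and equally in the paper's version, where $\theta(qn+h_j)$ with $n$ in a class modulo the sieve modulus produces progressions to $q$ times that modulus --- is that Zhang's proof of his Theorem 2 goes through with this extra fixed factor, uniformly for $q\le(\log x)^A$ (the piece coming from the fixed modulus $\widetilde W$ alone being absorbed by Siegel--Walfisz). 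That verification is the genuinely new, if routine, content; the paper asserts it without detail as part of the ``cheap'' extension, and your write-up should state it as a reworking of Zhang's Theorem 2 rather than as a formal consequence of it.
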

The proof of Theorem \ref{t1} is just a copy of the original one of Zhang's. The only modification is to set
$$
P(n)=\prod_{i=1}^{k_0}(qn+h_i)
\qquad\text{and}
\qquad
\S=\prod_{\substack{p\text{ prime}\\ p\nmid q}}\bigg(1-\frac{\nu_p(\hH)}{p}\bigg)\cdot \prod_{p\text{ prime}}\bigg(1-\frac{1}{p}\bigg)^{-k_0}.
$$
Then the difference between
\begin{align*}\T_2:=\sum_{\substack{d_0\mid\P\\ (d_0,q)=1}}\sum_{\substack{d_1\mid\P\\ (d_1,q)=1}}\sum_{\substack{d_2\mid\P\\ (d_2,q)=1}}\frac{\mu(d_1d_2)\varrho_2(d_0d_1d_2)}{\phi(d_0d_1d_2)}g(d_0d_1)g(d_0d_2)
\end{align*}
and
\begin{align*}
\T_2^*:=&\sum_{\substack{(d_0,q)=1}}\sum_{\substack{(d_1,q)=1}}\sum_{\substack{(d_2,q)=1}}\frac{\mu(d_1d_2)\varrho_2(d_0d_1d_2)}{\phi(d_0d_1d_2)}g(d_0d_1)g(d_0d_2)\\
=&\frac{\phi(q)}{q}\bigg(\frac{1}{(k_0+2l_0+1)!}\binom{2l_0+2}{l_0+1}\S(\log D)^{k_0+2l_0+1}+o(\L^{k_0+2l_0+1})\bigg)
\end{align*}
can be bounded by
$$
\frac{\phi(q)}{q}\bigg(\frac{\kappa_2}{(k_0+2l_0+1)!}\binom{2l_0+2}{l_0+1}\S(\log D)^{k_0+2l_0+1}+o(\L^{k_0+2l_0+1})\bigg).
$$
Then applying \cite[Theorem 2]{Zhang}, we can obtain the expected lower bound for
\begin{align*}
S_2:=\sum_{j=1}^{k_0}\bigg(\sum_{n\sim x}\theta(qn+h_j)\bigg)\lambda(n)^2
=\frac{q}{\phi(q)}\cdot k_0\T_2^*x+O(x\L^{-B}).
\end{align*}

As an immediate consequence of Theorem \ref{t1}, we have
\begin{corollary}\label{c1}
Suppose that $0\leq b<q$ and $(b,q)=1$. Let $p_{n}^{(b,q)}$ denote the $n$-th prime of the form $qm+b$. Then
$$
\liminf_{n\to\infty}\frac{p_{n+1}^{(b,q)}-p_{n}^{(b,q)}}{q}\leq 7\times 10^7.
$$
\end{corollary}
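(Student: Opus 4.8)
The plan is to derive Corollary~\ref{c1} directly from Theorem~\ref{t1} by pushing forward an admissible tuple under the affine map $t\mapsto qt+b$. Recall that the constant $7\times 10^7$ in Zhang's bound $\liminf_{n\to\infty}(p_{n+1}-p_n)\le 7\times 10^7$ is obtained by producing a concrete admissible tuple $\{g_1<g_2<\cdots<g_{k_0}\}$ with $k_0=3.5\times 10^6$ and diameter $g_{k_0}-g_1\le 7\times 10^7$. I would fix such a tuple and set
$$
h_i=qg_i+b\quad(1\le i\le k_0),\qquad \hH=\{h_1,\ldots,h_{k_0}\}.
$$
The case $q=1$ is just Zhang's theorem, so I may assume $q\ge 2$, and then $(b,q)=1$ forces $1\le b<q$.

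The two hypotheses of Theorem~\ref{t1} would be checked as follows. Since $h_i\equiv b\pmod q$ and $(b,q)=1$, we get $(h_i,q)=1$ for every $i$, hence $(q,h_1\cdots h_{k_0})=1$. For admissibility fix a prime $p$: if $p\mid q$ then every $h_i\equiv b\pmod p$, so $\nu_p(\hH)=1<p$; if $p\nmid q$ then $t\mapsto qt+b$ is a bijection of $\Z/p\Z$, so $\nu_p(\hH)=\nu_p(\{g_1,\ldots,g_{k_0}\})<p$ because $\{g_i\}$ is admissible. Thus $\hH$ is admissible. Since $q$ is fixed, for every large $x$ we certainly have $1\le q\le(\log x)^A$ (any $A>0$ works), so Theorem~\ref{t1} applies and gives $n\in[x,2x]$ with at least two primes among $\{qn+h_1,\ldots,qn+h_{k_0}\}$, say $qn+h_i$ and $qn+h_j$ with $h_i<h_j$.

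Writing $qn+h_i=q(n+g_i)+b$ and $qn+h_j=q(n+g_j)+b$, both of these are primes of the form $qm+b$, and
$$
(qn+h_j)-(qn+h_i)=h_j-h_i=q(g_j-g_i)\le q(g_{k_0}-g_1)\le 7\times 10^7\, q.
$$
Listing the primes of the form $qm+b$ lying in $[qn+h_i,\,qn+h_j]$ in increasing order (a nonempty finite set containing both endpoints), some consecutive pair among them has difference at most $h_j-h_i\le 7\times 10^7\,q$. Letting $x\to\infty$ forces the smaller prime of such a pair to tend to infinity, so $\liminf_{n\to\infty}(p_{n+1}^{(b,q)}-p_n^{(b,q)})\le 7\times 10^7\,q$, which after dividing by $q$ is the assertion.

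As for difficulty, there is essentially no real obstacle: the analytic content is entirely inside Theorem~\ref{t1}, and everything else is bookkeeping. The one point that genuinely deserves care is the admissibility verification for $\hH$, namely the split between primes $p\mid q$ (where the dilated residues all collapse to the single class $b\bmod p$, so $\nu_p=1$) and primes $p\nmid q$ (where the dilation is a bijection on residues and so preserves $\nu_p$); once this is in hand, the fact that the tuple diameter is multiplied by exactly $q$ yields the clean constant $7\times 10^7$ after normalising by $q$.
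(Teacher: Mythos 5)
Your proposal is correct and is essentially the paper's own argument: the paper likewise applies Theorem~\ref{t1} to the tuple $\{b+qh_1,\ldots,b+qh_{k_0}\}$ built from an admissible tuple of diameter at most $7\times 10^7$, relying on exactly the admissibility transfer and coprimality with $q$ that you verify. Your write-up merely makes explicit the prime-splitting check ($p\mid q$ versus $p\nmid q$) and the final consecutive-gap bookkeeping that the paper leaves as ``evident.''
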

Apparently, Corollary \ref{c1} follows from the evident fact that the admissibility of $\{b+qh_1,b+qh_2,\cdots,b+qh_{k_0}\}$ is implied by the one of 
$\{h_1,h_2,\cdots,h_{k_0}\}$.

Another application of Theorem \ref{t1} is on the Polignac numbers \cite{Polignac}. A positive even number $d$ is called a Polignac number, if there exist infinitely many $n$ such that
$$
p_{n+1}-p_n=d.
$$
Of course, it is believed that every positive even number is a Polignac number. 
Recently, combining Zhang's techniques with some lemmas from \cite{PM}, Pintz \cite{Pintz} proved that the set of all Polignac numbers has a positive lower density.
Now, we shall prove that
\begin{theorem}\label{t2}
Suppose that $k_0\geq 3.5\times 10^6$ and  $\hH=\{h_1,\ldots,h_{k_0}\}$ is admissible. Let
$$
\sigma(\hH)=\{h_j-h_i:\,h_i<h_j\}.
$$
Then for any $m\geq 1$, the set
$$
m\cdot\sigma(\hH)=\{md:\,d\in\sigma(\hH)\}
$$
contains at least one Polignac number.
\end{theorem}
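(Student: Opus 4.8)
\section*{Proof proposal}

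The plan is to deduce Theorem~\ref{t2} from Theorem~\ref{t1} by forcing the two primes that the sieve produces to be \emph{consecutive} primes; this is achieved with a tailored ``$W$-trick''. Assume $h_1<h_2<\cdots<h_{k_0}$. First note that the dilate $m\hH=\{mh_1,\dots,mh_{k_0}\}$ is again admissible — for $p\mid m$ all its elements vanish mod $p$, and for $p\nmid m$ one has $\nu_p(m\hH)=\nu_p(\hH)<p$ — and that $\sigma(m\hH)=m\cdot\sigma(\hH)$. Since $\hH$ is admissible, $\nu_2(\hH)\le 1$, so all the $h_i$, hence all the $mh_i$, have the same parity. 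Put $H=h_{k_0}-h_1$ and let $E$ be the finite set of integers $c$ with $mh_1<c<mh_{k_0}$ and $c\notin m\hH$; write $E=E_0\sqcup E_1$ according as $c$ has the opposite or the same parity as the $mh_i$. I would then pick pairwise distinct primes $P_1,\dots,P_{|E_1|}$, all $>mH$, one for each element $c_j$ of $E_1$, set $W=2P_1\cdots P_{|E_1|}$, and use the Chinese Remainder Theorem to choose $\nu_0$ with $\nu_0\not\equiv -mh_1\pmod 2$ and $\nu_0\equiv -c_j\pmod{P_j}$ for every $j$ (after adding a multiple of $W$ we may take $\nu_0>0$). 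Then the tuple $\{\nu_0+mh_i\}_{i=1}^{k_0}$ is admissible (a translate of $m\hH$) and is coprime to $W$: modulo $2$ each $\nu_0+mh_i$ is odd, and modulo $P_j$ each $\nu_0+mh_i\equiv mh_i-c_j\not\equiv 0$ because $0<|mh_i-c_j|\le mH<P_j$.

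Next I would apply Theorem~\ref{t1} with $q=W$ (permissible: $W$ is a fixed integer, so $W\le(\log x)^A$ for $x$ large with $A=1$, and the coprimality has just been checked) to the admissible tuple $\{\nu_0+mh_1,\dots,\nu_0+mh_{k_0}\}$. For every sufficiently large $x$ there is $n'\in[x,2x]$ with at least two primes among $\{Wn'+\nu_0+mh_i\}_i$. Setting $n=Wn'+\nu_0$, the set $\{n+mh_1,\dots,n+mh_{k_0}\}$ contains at least two primes, while $n\equiv\nu_0\pmod W$ and $n\to\infty$ as $x\to\infty$. The point of this congruence is that for every $c\in E_0$ we have $2\mid n+c$ and for every $c=c_j\in E_1$ we have $P_j\mid n+c_j$; hence, for $n$ large, every integer strictly between $n+mh_1$ and $n+mh_{k_0}$ that is not of the form $n+mh_i$ is composite. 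Consequently, if $n+mh_a<n+mh_b$ are two primes of the tuple with no $n+mh_\ell$ prime for $h_a<h_\ell<h_b$, then there is no prime at all strictly between them, so $n+mh_a$ and $n+mh_b$ are consecutive primes, and their difference is $m(h_b-h_a)\in m\cdot\sigma(\hH)$. Letting $x$ run through a sequence tending to infinity produces infinitely many pairs of consecutive primes whose (even, positive) difference lies in the finite set $m\cdot\sigma(\hH)$, so by the pigeonhole principle some fixed $d\in\sigma(\hH)$ satisfies $p_{N+1}-p_N=md$ for infinitely many $N$; that is, $md$ is a Polignac number.

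The step I expect to be the real obstacle is exactly the elimination of ``intruder'' primes lying strictly between the two sieve-primes: without it, pigeonholing yields only a Polignac number bounded by the diameter of $m\hH$, not one inside $m\cdot\sigma(\hH)$. What makes the $W$-trick resolve this is that the admissibility of $\hH$ frees up a residue class modulo $2$ that annihilates all the ``wrong-parity'' in-between positions while keeping every $\nu_0+mh_i$ coprime to $W$; only finitely many same-parity positions then remain, and each is knocked out individually by one of the large auxiliary primes $P_j$. The resulting modulus $W$ is a constant, so it is harmless in Theorem~\ref{t1}, and everything else in the argument is a direct appeal to that theorem followed by two uses of the pigeonhole principle.
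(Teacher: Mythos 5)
Your proposal is correct and follows essentially the same route as the paper: a CRT-chosen shift together with auxiliary large primes forming a constant modulus $q$ in Theorem~\ref{t1}, so that every value strictly between the tuple entries is forced composite, followed by a pigeonhole to pin down a fixed gap $m(h_{j}-h_{i})$ occurring infinitely often as a gap between consecutive primes. The only (cosmetic) differences are that the paper takes $q=mp_1\cdots p_l$ with $b\equiv 1\pmod m$ rather than your $W=2P_1\cdots P_{|E_1|}$, and that you make explicit the parity step (even in-between positions are composite) which the paper leaves implicit.
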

\begin{proof}
Without loss of generality, assume that $h_1<h_2<\cdots<h_{k_0}$. Let
$$
X=\{a\in[mh_1,mh_{k_0}]:\,a\equiv mh_1\pmod{2},\ a\not\in\{mh_1,\ldots,mh_{k_0}\}\}.
$$
Assume that $X=\{a_1,a_2,\ldots,a_l\}$. Arbitrarily choose distinct primes $p_1,p_2,\ldots,p_l>mh_{k_0}$. 
Let $b>0$ be an integer such that 
$
b\equiv 1\pmod{m}
$
and
$
b\equiv-a_j\pmod{p_j}$
for $1\leq j\leq l$. Let $q=mp_1p_2\cdots p_l$. Since $(b,m)=1$, $\{b+mh_1,\cdots,b+mh_{k_0}\}$ is admissible. And for each $j$, noting that 
$p_j\mid b+a_j$ and $p_j>mh_{k_0}$, we must have 
$$
\prod_{i=1}^{k_0}(b+mh_i)\not\equiv0\pmod{p_j}.
$$
That is, $q$ is prime to $(b+mh_1)\cdots(b+mh_{k_0})$. By Theorem \ref{t1}, there exist infinitely many $n$ such that
$$
\{qn+b+mh_1,qn+b+mh_2,\ldots,qn+b+mh_{k_0}\}
$$
contains at least two primes. Let $n_1,n_2,n_3,\ldots$ be all such $n$. For each $s$, we may choose a pair $(i_s,j_s)$ with $i_s<j_s$ such that both $qn_s+b+mh_{i_s}$ and $qn_s+b+mh_{j_s}$ are prime, but $qn_s+b+mh_{k}$ are composite for all $i_s<k<j_s$. Clearly there exists a pair $(i_*,j_*)$ such that
$$
|\{s:\,(i_s,j_s)=(i_*,j_*)\}|
$$
is infinity. That is, $qn+b+mh_{i_*}$ and $qn+b+mh_{j_*}$ are prime for infinitely many $n$. But according to the definition of $q$, for any $a_j\in(mh_{i_*},mh_{j_*})$, $qn+b+a_j$ is divisible by $p_j$. So $qn+b+mh_{i_*}$ and $qn+b+mh_{j_*}$ must be two consecutive primes, i.e., $m(h_{j_*}-h_{i_*})$ is a Polignac number. We are done.
\end{proof}

\begin{Ack} I thank Professor Zhi-Wei Sun for his helpful discussions on Zhang's theorem.
\end{Ack}

\end{document}